\author{Michael Pfeuti}
\title{Twist Triviality of Canonical Seifert Surfaces}
\date{}
\theoremstyle{plain}
\newtheorem{theorem}{Theorem}
\newtheorem{corollary}{Corollary}
\newtheorem{proposition}{Proposition}
\newtheorem{lemma}{Lemma}
\theoremstyle{definition}
\newtheorem{definition}{Definition}
\theoremstyle{remark}
\newtheorem*{remark}{Remark}
\let\ORIincludegraphics\includegraphics
\renewcommand{\includegraphics}[2][]{\ORIincludegraphics[scale=0.6,#1]{#2}}
\begin{document}

\begin{abstract}
We generalize the idea of unknotting knots to Seifert surfaces. We define an operation called ribbon twist which serves as the equivalent of a crossing change for knots. A Seifert surface is considered untwisted, the equivalent to unknotted, if it is isotopic to a standardly embedded n-fold punctured torus. A Seifert surface is said to be twist trivial if it can be untwisted by ribbon twists. We show that canonical Seifert surfaces are twist trivial.
\end{abstract}
\maketitle

\section{Introduction}
A prevalent method for studying a link $L$ is to analyze its \emph{Seifert surfaces}, i.e. compact, connected, orientable surfaces $\Sigma$ that have the link as their boundaries $\partial \Sigma = L$. The well known fact that every link has a Seifert surface was shown by Seifert in \cite{Seifert1935} by presenting the \emph{Seifert algorithm}. A \emph{canonical Seifert surface} is a Seifert surface that can be obtained by the Seifert algorithm.

\begin{figure}[b]
  \centering
  \begin{subfigure}[t]{0.22\textwidth}
    \centering
    \includegraphics[scale=1.5]{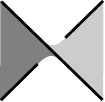}
    \caption{}
    \label{fig:crossing_change_surface_a}
  \end{subfigure}
  \begin{subfigure}[t]{0.22\textwidth}
    \centering
    \includegraphics[scale=1.5]{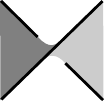}
    \caption{}
    \label{fig:crossing_change_surface_b}
  \end{subfigure}
  \begin{subfigure}[t]{0.22\textwidth}
    \centering
    \includegraphics[scale=1.5]{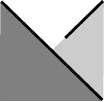}
    \caption{}
    \label{fig:crossing_change_surface_c}
  \end{subfigure}
  \begin{subfigure}[t]{0.22\textwidth}
    \centering
    \includegraphics[scale=1.5]{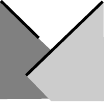}
    \caption{}
    \label{fig:crossing_change_surface_d}
  \end{subfigure}
  \caption{}
\end{figure}

It is true that every link $L$ can be unknotted by $u(L)$ crossing changes (Chapter 3.1 in \cite{Adams1994}) but how does a crossing change of $L$ affect a Seifert surface $\Sigma$ of this link? To understand how the surface changes, it suffices to look at the effect of a crossing change locally. A crossing change involves two strands. Now, there are two cases. In the first case, the surface forms a ribbon between the two strands. This ribbon features a half twist caused by the crossing of the two strands (Figure \ref{fig:crossing_change_surface_a}). The effect of a crossing change on the surface is a full twist such that the half twist twists in the opposite direction (Figure \ref{fig:crossing_change_surface_b}). Notice that the resulting surface is still a Seifert surface. In the second case, the surface in the vicinity of the crossing is disconnected. Therefore, there are two surface patches, one for each strand (Figure \ref{fig:crossing_change_surface_c}). Here, a crossing change causes the surface patches to be swapped. In general, this causes the surface to be self-intersecting and the resulting surface ceases to be a Seifert surface (Figure \ref{fig:crossing_change_surface_d}). 

As the latter of the two cases does not lead to a Seifert surface, we want to exclude such crossing changes. Therefore, we generalize the crossing changes of the first case by allowing an operation of inserting full twists into any ribbon. 

\begin{definition}
A \emph{ribbon twist} is a cut and glue operation on a Seifert surface $\Sigma$. Let $I = \varphi([0,1]) $ be an embedded interval such that $\varphi(0)$, $\varphi(1) \in \partial \Sigma$ and $\varphi((0,1)) \in \Sigma \setminus \partial \Sigma$  (Figure \ref{fig:ribbon_twist_a}). Cut along $I$, insert a full twist on one side (Figure \ref{fig:ribbon_twist_b}) and glue both sides back together along $I$ (Figure \ref{fig:ribbon_twist_c}). 
\end{definition}

\begin{figure}[htp]
  \centering
  \begin{subfigure}[t]{0.3\textwidth}
    \centering
    \begin{overpic}{figure2a.pdf}
      \put(70, 55){$I$}
    \end{overpic}
    \caption{}
    \label{fig:ribbon_twist_a}
  \end{subfigure}
  \begin{subfigure}[t]{0.3\textwidth}
    \centering
    \includegraphics{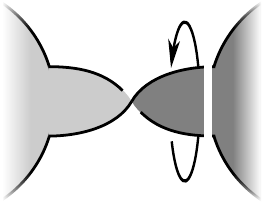}
    \caption{}
        \label{fig:ribbon_twist_b}
  \end{subfigure}
  \begin{subfigure}[t]{0.3\textwidth}
    \centering
    \begin{overpic}{figure2c.pdf}
      \put(70, 55){$I$}
    \end{overpic}
    \caption{}
    \label{fig:ribbon_twist_c}
  \end{subfigure}
  \caption{}
  \label{fig:ribbon_twist}
\end{figure}

Based on ribbon twists, we can define an equivalence relation for Seifert surfaces.

\begin{definition}
Let $\Sigma_1$ and $\Sigma_2$ be two Seifert surfaces. The surfaces $\Sigma_1$ and $\Sigma_2$ are called \emph{twist equivalent} if $\exists R_1, \dots, R_n$ ribbon twists such that $R_n \circ ... \circ R_1 (\Sigma_1)$ is isotopic to $\Sigma_2$.
\end{definition}

Now, we pose the question if a link along with its Seifert surface can be unknotted by isotopies and ribbon twists. Foremost, we would like to know if the link can be unknotted with isotopies and ribbon twists such that the resulting surface is a standard n-fold punctured torus. Notice, it suffice to consider if the surface can be transformed into a standard n-fold punctured torus since the associated link is automatically unknotted in the process. We focus on the standard n-fold punctured torus because this surface is in a sense unknotted. 
\begin{figure}[b]
  \centering
  \includegraphics{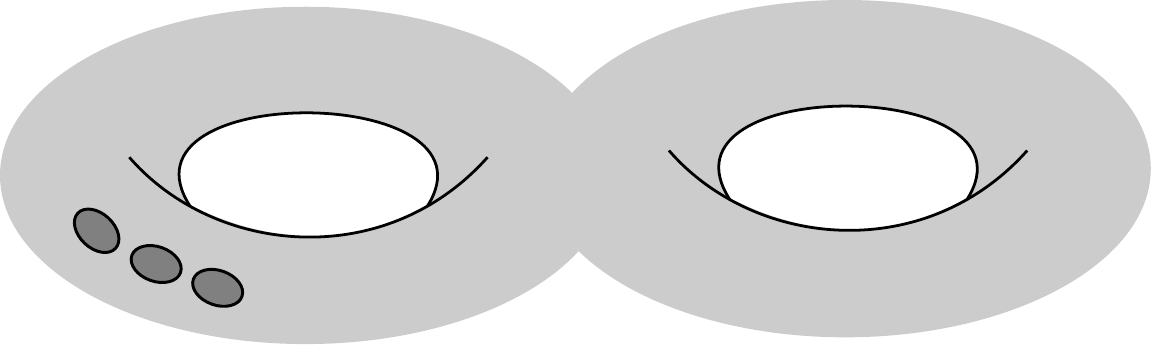}
  \caption{}
  \label{fig:twist_trivial}
\end{figure}
\begin{definition}
A Seifert surface is called \emph{twist trivial} if it is twist equivalent to a standardly embedded n-fold punctured torus, i.e. a subsurface of the boundary of an unknotted handlebody in $\mathbb{R}^3$ (Figure \ref{fig:twist_trivial}).
\end{definition}

The first result is a necessary condition for a Seifert surface to be twist trivial.
\begin{proposition}
\label{prop:twist_trivial_free}
If a Seifert surface $\Sigma$ is twist trivial then $\pi_1(\mathbb{R}^3 \setminus \Sigma)$ is a free group.
\end{proposition}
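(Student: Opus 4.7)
The plan is to establish the proposition in two steps: a base case for the standard surface, and an invariance step showing that ribbon twists preserve $\pi_1$ of the complement.

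\textbf{Base case.} For the standard $n$-fold punctured torus $\Sigma_0$, I would compute $\pi_1(\mathbb{R}^3 \setminus \Sigma_0)$ directly. Since $\Sigma_0$ is a subsurface of the boundary of an unknotted handlebody $H \subset \mathbb{R}^3$, the complement decomposes as $\mathrm{int}(H) \cup (\partial H \setminus \Sigma_0) \cup (\mathbb{R}^3 \setminus H)$. Both $H$ and $\mathbb{R}^3 \setminus H$ are handlebodies by the standard Heegaard splitting of $S^3$ and hence have free fundamental groups, while $\partial H \setminus \Sigma_0$ is a disjoint union of disks. Applying the Seifert--van Kampen theorem then produces a free group.

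\textbf{Invariance step.} Let $\Sigma' = R(\Sigma)$ be the result of a single ribbon twist along an arc $I \subset \Sigma$. I would pick a 3-ball $B$ containing the twist region with $\Sigma = \Sigma'$ outside $B$, and model $B$ as $[0,1] \times D^2$ with $I$ parameterizing the $[0,1]$-direction. Then $\Sigma \cap B$ is a flat band $[0,1] \times I_0$ and $\Sigma' \cap B$ is the same band with a full $2\pi$-twist inserted. The complements $B \setminus \Sigma$ and $B \setminus \Sigma'$ fiber over the contractible interval $[0,1]$ with common fiber $D^2 \setminus I_0 \simeq S^1$, so both bundles are trivializable and yield homotopy equivalences to $[0,1] \times S^1$. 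Since $\Sigma \cap \partial B = \Sigma' \cap \partial B$, the inclusion of $\pi_1(\partial B \setminus \Sigma)$ into the local piece agrees for the two surfaces. Applying Seifert--van Kampen to $\mathbb{R}^3 \setminus \Sigma = (B \setminus \Sigma) \cup ((\mathbb{R}^3 \setminus \mathrm{int}(B)) \setminus \Sigma)$ and noting that the outside piece is literally the same for $\Sigma$ and $\Sigma'$, I would conclude $\pi_1(\mathbb{R}^3 \setminus \Sigma) \cong \pi_1(\mathbb{R}^3 \setminus \Sigma')$. Iterating the invariance over a sequence of ribbon twists taking $\Sigma$ to $\Sigma_0$ then gives $\pi_1(\mathbb{R}^3 \setminus \Sigma) \cong \pi_1(\mathbb{R}^3 \setminus \Sigma_0)$, which is free.

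\textbf{Main obstacle.} The invariance step is the technical heart: I need to check carefully that the full $2\pi$-twist corresponds to a trivial monodromy on the fiber's $\pi_1 = \mathbb{Z}$, and that the resulting trivialization is compatible with the boundary inclusions in the Seifert--van Kampen diagram. The subtlety is that the ribbon twist genuinely changes the embedding of $\Sigma$ (indeed, it can change the boundary link $\partial \Sigma$), so the argument crucially relies on the fact that this geometric change is invisible to $\pi_1$ of the complement.
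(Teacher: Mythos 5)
Your overall route is the same as the paper's: the paper observes that ribbon twists (and isotopies) do not change $\pi_1(\mathbb{R}^3\setminus\Sigma)$ and then applies Seifert--van Kampen to the standardly embedded surface to obtain a free group. Your base case is a correct, slightly more explicit version of that computation, and your invariance step is an attempt to prove what the paper only asserts; the final iteration and your remark that the boundary link may change are both fine.

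The one genuine problem is the local model in the invariance step. With $B=[0,1]\times D^2$ and $I$ running in the $[0,1]$-direction, the core direction of the ribbon lies in the $D^2$-factor, so the band must leave $B$ through the side $[0,1]\times\partial D^2$; hence the cross-section $I_0$ is a chord meeting $\partial D^2$, and $D^2\setminus I_0$ is two half-discs, not homotopy equivalent to $S^1$, so $B\setminus\Sigma$ does not fiber with circle fibers over the $I$-direction. If instead you place $I_0$ in the interior of $D^2$, the fiber is right but the band then meets $\partial B$ only in the faces $\{0\}\times D^2$ and $\{1\}\times D^2$, which correspond to the endpoints of $I$; those lie on $\partial\Sigma$, where the surface ends, so the band could not connect to the rest of $\Sigma$ at all. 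The repair is to exchange the roles of the two directions: let the $[0,1]$-factor run along the core of the ribbon, so that $\Sigma\cap B=[0,1]\times J$ with $J$ a slit in the interior of $D^2$ (a parallel copy of $I$) and the two strands of $\partial\Sigma$ equal to $[0,1]\times\partial J$. Then every fiber $D^2\setminus J$ is an annulus, the twisted band is obtained by rotating the slit in the fiber over $s$ by an angle $\theta(s)$ with $\theta(0)=0$ and $\theta(1)=2\pi$, both local complements are homotopy equivalent to $S^1$ with the loop on $\partial B$ encircling either end of the band mapping to a generator, and your Seifert--van Kampen comparison (identical outside piece and identical annular intersection $\partial B\setminus\Sigma$) then goes through exactly as you outline. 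So the strategy is sound and matches the paper; the gap is the concrete geometry of the twist region, which as written makes the key fibration claim false.
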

The main result of this article is the fact that any canonical Seifert surface of any link is twist trivial. 
\begin{theorem}
\label{thm:canonical_surface_twist_trivial}
If $\Sigma$ is a canonical Seifert surface then $\Sigma$ is twist trivial.
\end{theorem}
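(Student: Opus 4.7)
My plan is to exploit the explicit disk-band description of canonical Seifert surfaces coming directly from the Seifert algorithm. If $D$ is a diagram with Seifert circles $C_1,\dots,C_s$ and crossings $x_1,\dots,x_k$, then $\Sigma$ decomposes as the union of Seifert disks $D_1,\dots,D_s$ bounded by the $C_i$ together with one half-twisted band $B_j$ at each crossing $x_j$ connecting the two Seifert disks adjacent to $x_j$.

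The first step is a normalization by ambient isotopy. The nesting relation among the $C_i$ in the plane of $D$ is a partial order, and I would use it to push each $D_i$ to a horizontal plane $\mathbb{R}^2 \times \{h_i\}$ where $h_i$ is the nesting depth of $C_i$. This places $\Sigma$ in a thin vertical slab, with the disks pairwise unlinked and each band $B_j$ realized as a short half-twisted ribbon joining two disks at neighboring heights. After this normalization, all of the non-triviality of the embedding is concentrated in the half-twists and in how the bands are threaded around the disks.

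The second step is induction on the number of crossings $k$. The base case $k=0$ is trivial, since $\Sigma$ is then a disjoint union of flat concentric disks, which is already a subsurface of an unknotted handlebody boundary. For the inductive step I would look at a band $B$ corresponding to a crossing $x$ lying on an outermost face of the Seifert graph of $D$ and show that, after finitely many ribbon twists, $B$ can be isotoped into a standard handle of the target $n$-fold punctured torus (or removed altogether, giving a canonical surface of a diagram with fewer crossings). The planarity of the Seifert graph in the plane of $D$ is what guarantees that such an outermost choice always exists and that the reduction preserves canonicality.

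The main obstacle will be the parity mismatch between ribbon twists, which insert a full twist, and the bands of $\Sigma$, which each carry only a half-twist; a single ribbon twist cannot turn a half-twisted ribbon into a flat one. I plan to address this by always working on pairs of bands rather than individual ones: a ribbon twist performed along an arc that crosses two adjacent half-twisted bands can be arranged to cancel their combined half-twists against each other. A spanning tree of the Seifert graph provides a natural pairing scheme, identifying which bands are absorbed into the ambient disk structure and which become handles, and should supply enough compatible pairs to straighten all the excess twisting. Once the induction terminates, the surface visibly sits on the boundary of a regular neighborhood of the stacked disks, which is an unknotted handlebody, verifying twist triviality by the definition given above.
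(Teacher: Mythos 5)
Your inductive step is where the whole theorem actually lives, and as stated it has two genuine gaps. First, ``removed altogether'' is not an available move: ribbon twists and isotopies preserve the homeomorphism type of the surface, so a single half-twisted band can never simply be deleted from $\Sigma$; bands can only disappear through topology-preserving operations, e.g.\ two \emph{adjacent} bands of opposite sign cancelling by a Reidemeister II move (leaving a tube that either splits off a component or becomes a handle), or a degree-one disc retracting with its band into the neighbouring disc. This forces you into your pairing idea, but then you need a guarantee that a usable configuration exists at \emph{every} stage of the induction, and ``a band on an outermost face of the Seifert graph'' does not supply one. The paper needs a real combinatorial dichotomy here, proved from planarity, bipartiteness and Euler's formula for the Seifert graph: a type-II-free canonical surface always has either two adjacent parallel bands or a disc of degree $1$, $2$ or $3$; and the low-degree cases (especially degree $3$ with three distinct neighbouring discs) require a careful band-sliding argument in which ribbon twists are spent to fix the signs of the bands being slid over so that no entanglement is created. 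Your spanning-tree pairing scheme says nothing about the stages where no two bands are adjacent, which is exactly where the work is.

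Second, your normalization keeps the nesting of Seifert circles: stacking the discs at their nesting depths leaves precisely the ``threading of bands around discs'' that you acknowledge, and the proposal never explains how that threading is undone --- nested discs are what obstruct sliding a band end along the boundary. The paper instead eliminates nesting \emph{before} inducting, via the Hirasawa--Aaltonen isotopy removing all type II Seifert circles; the resulting surface is almost planar, and then band ends can be slid along the boundary without entanglement at the cost of ribbon twists on the bands passed over. Without either that preliminary reduction or a substitute argument in your stacked picture, the central claim that ``$B$ can be isotoped into a standard handle after finitely many ribbon twists'' is unsupported. (A minor point: in the base case the canonical surface with no crossings is not a disjoint union of discs but discs joined by unknotted tubes, and one concludes by collapsing the tree of tubes to a single disc with holes and handles.)
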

This theorem provides a large class of Seifert surfaces which are twist trivial. However, we do not yet know if this class can be enlarged. For instance, are all fiber surfaces (\cite{Stallings1978,Rolfsen1976}) twist trivial? Baader and Dehornoy answered this question partially. They showed that fiber surfaces of positive braid knots are twist trivial (\cite{Baader2013}). Yet, the most general question to ask is if the necessary condition shown in Proposition \ref{prop:twist_trivial_free} is also a sufficient condition. In other words, is every Seifert surface with a free fundamental group of its complement twist trivial? We do not yet know the answer.

Furthermore, we define in analogy to the unknotting number of links the untwisting number of twist trivial surfaces.
\begin{definition}
Let $\Sigma$ be a twist trivial surface. The \emph{untwisting number} of $\Sigma$ is $ut(\Sigma)$ the minimal number of ribbon twists required to untwist the surface, that is
\begin{equation*}
ut(\Sigma) = \min \left\{n\in \mathbb{N} | R_n \circ \dots \circ R_1(\Sigma) \text{ isotopic to } \Sigma_1\right\}
\end{equation*}
where $R_1, \dots, R_n$ are ribbon twists and $\Sigma_1$ the standard n-fold punctured torus.
\end{definition}
As a corollary of Theorem \ref{thm:canonical_surface_twist_trivial} we obtain an upper bound for the untwisting number of canonical Seifert surfaces.
\begin{corollary}
\label{cor:upper_bound}
Let $\Sigma=\left\{D_1,\dots,D_n;B_1,\dots,B_m;T_1,\dots,T_o\right\}$ be a canonical Seifert surface. Then,
\begin{equation*}
ut(\Sigma) \leq 6 \cdot \sum\limits_{i=1}^{m-1} i.
\end{equation*}
\end{corollary}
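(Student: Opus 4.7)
My plan is to extract a quantitative bound from the constructive proof of Theorem \ref{thm:canonical_surface_twist_trivial}. That proof presumably exhibits, for any canonical Seifert surface $\Sigma$, an explicit sequence of ribbon twists (together with ambient isotopies) taking $\Sigma$ to a standardly embedded punctured torus. The corollary is obtained by bookkeeping: count the ribbon twists used in that sequence as a function of the combinatorial data $n$, $m$, $o$ of $\Sigma$.

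The intended procedure is inductive on the bands $B_1,\ldots,B_m$. Regard the disks $D_1,\ldots,D_n$ as fixed in their standard stacked position on the boundary of an unknotted handlebody. At step $i$, the bands $B_1,\ldots,B_{i-1}$ have already been isotoped into standard position on that handlebody, and we wish to incorporate $B_i$. Any twisting data carried on the interior of a single band (the regions $T_j$ lying on $B_i$) contributes to the standard-position normal form of $B_i$ itself without competing against the other bands and so does not enter the counting between distinct $B_i$'s. The only genuine obstruction is the linking of $B_i$ with each previously placed band $B_j$, $j<i$, which must be removed by ribbon twists.

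The crux of the argument is a local lemma of the form: \emph{any linking between two bands attached in the standard disk configuration can be resolved by at most $6$ ribbon twists}. Granting this, step $i$ of the induction costs at most $6(i-1)$ ribbon twists, so summing over $i$ from $2$ to $m$ gives
\begin{equation*}
ut(\Sigma) \leq \sum_{i=2}^{m} 6(i-1) = 6 \sum_{i=1}^{m-1} i,
\end{equation*}
which is exactly the asserted bound.

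The main obstacle will be justifying the constant $6$ in the per-pair lemma. This requires a finite case analysis of the ways two bands, each with two endpoints on disks of a stacked disk configuration, can be relatively linked without disturbing the already-standardized portion of the surface; for each configuration one must exhibit an explicit ribbon-twist sequence of length at most $6$ that unlinks the pair. I expect the constant $6$ to correspond to the worst case, namely a pair whose cores are linked both across their two endpoint regions and along their interior, with each of the three contributions absorbed by two ribbon twists. Once this local count is secured, the global bound is immediate from the induction above.
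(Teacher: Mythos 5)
Your argument has a genuine gap: the entire quantitative content of the bound is delegated to an unproved ``local lemma'' asserting that any linking between two bands in your stacked-disk configuration can be resolved by at most $6$ ribbon twists, and you offer only the hope that a finite case analysis will confirm the constant. Nothing in the paper supports a constant per-pair cost, and the actual source of the factor $6$ is quite different: the proof of Theorem \ref{thm:canonical_surface_twist_trivial} removes bands (not places them), guided by Lemma \ref{lem:graph_limitation} (adjacent bands, or a disc of degree $1$, $2$ or $3$), and in the worst case --- a degree-three disc whose bands go to three different discs, where only one band can be removed --- the cost of removing a single band is $2(m-1)$ ribbon twists, i.e.\ proportional to the \emph{current total number of bands}, because the slid band end may cross every other band twice and each crossing may require a sign change. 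Summing gives $\sum_{i=1}^{m-1}2i$ for type~II free surfaces, and the remaining factor comes from first applying Theorem \ref{thm:altonen}, which eliminates type~II Seifert discs at the price of multiplying the number of bands by at most three; the $6$ is thus $2\times 3$, not ``six twists per linked pair.'' Your induction scheme (discs frozen in standard position, bands incorporated one at a time with pairwise unlinking) would also need a justification that the intermediate objects make sense and that entanglement of a new band with the already-standardized part really decomposes into pairwise contributions of bounded cost; the paper never needs such a statement and it is far from clear it holds.

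A secondary but telling error: you read the $T_j$ in $\Sigma=\{D_1,\dots,D_n;B_1,\dots,B_m;T_1,\dots,T_o\}$ as ``twisting data carried on the interior of a single band.'' In the paper's notation the $T_j$ are unknotted tubes joining the components of the canonical Seifert surface (Section \ref{sec:seifert_surface}); they play a structural role in the induction of Theorem \ref{thm:canonical_surface_twist_trivial} (the components and tubes form a tree) and cannot be absorbed into a normal form of an individual band. To repair your write-up you should instead track the algorithm of the theorem's proof case by case, record the worst-case twist count per removed band as a function of the current band number, and account separately for the band inflation caused by the type~II disc removal.
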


\textbf{Acknowledgements.} I would like to thank Prof. Dr. Sebastian Baader for introducing me to this topic and for all the insightful discussions and ideas. Furthermore, I would like to thank Filip Misev, Livio Liechti and Luca Studer for their helpful advice and discussions.

\section{Seifert Surfaces}
\label{sec:seifert_surface}
The fundamental fact that every link has a Seifert surface was proven by Seifert in 1935 (\cite{Seifert1935}). He presented an algorithm, now known as the Seifert algorithm, which allows the construction of a Seifert surface for any link. It laid the foundation for numerous methods for studying knots. For instance, a Seifert surface is required to define the associated Seifert matrix, which is then required for the definition of important knot invariants such as the knot signature or the Alexander polynomial (\cite{Alexander1928,Murasugi1965, Cromwell2004}).

The Seifert algorithm produces a set of disjoint closed curves. These curves are known as \emph{Seifert circles}. We distinguish two types of Seifert circles like \cite{Kauffman1987, Aaltonen2014}. A \emph{type I Seifert circle} does not enclose nor is it enclosed in any other Seifert circle. A \emph{type II Seifert circle} does enclose or is enclosed in at least one other Seifert circle. The Seifert circles are then the basis for a set of discs. These discs are bounded by Seifert circles and called \emph{Seifert discs}. Likewise, if the Seifert disc is bounded by a type I Seifert circle (respectively type II) we call it \emph{type I Seifert disc} (respectively \emph{type II}). The Seifert discs are connected by half twisted bands to produce the Seifert surface. Such a half twisted band is called \emph{positive} (respectively \emph{negative}) if it was obtained from a positive (respectively negative) crossing. A canonical Seifert surface is constructed from Seifert discs, half twisted bands and unknotted tubes. So, we usually interpret a canonical Seifert surface $\Sigma$ as a set of Seifert disc $\left\{D_1,\dots,D_n\right\}$, a set of half twisted bands $\left\{B_1,\dots,B_m\right\}$ and a set of unknotted tubes $\left\{T_1,\dots,T_o\right\}$. We write $\Sigma = \left\{D_1,\dots,D_n;B_1,\dots,B_m; T_1,\dots,T_o\right\}$ for a canonical Seifert surface $\Sigma$. The number of half twisted bands attached to a Seifert disc $D_i$ is called the \emph{degree of $D_i$} and is denoted as $deg(D_i)$. We write $B(D_i)$ for the set of half twisted bands attached to disc $D_i$. If the unknotted tubes $\left\{T_1,\dots,T_o\right\}$ are removed from a canonical Seifert surface we are left with $o+1$ connected components. We refer to such a connected component simply as a \emph{component} of the canonical Seifert surface. In the following we derive certain results which are only true for individual components of a canonical Seifert surface. In this case we write $\Sigma = \left\{D_1,\dots,D_n;B_1,\dots,B_m; -\right\}$, which indicates that there are no unknotted tubes and therefore the surface consists of only one component. Lastly, two half twisted bands $B_k, B_l$ are called \emph{adjacent} if $B_k$ and $B_l$ connect the same Seifert discs, $B_k, B_l \in B(D_i) \cap B(D_j)$, and there does not exist any band between $B_k$ and $B_l$ (Figure \ref{fig:adjacent_band_a}). Figure \ref{fig:adjacent_band_b} shows an example of non-adjacent bands.
\begin{figure}[t]
  \centering
  \begin{subfigure}[t]{0.3\textwidth}
    \centering
    \begin{overpic}{figure4a.pdf}
      \put(-8,45){$D_i$}
      \put(75,45){$D_j$}
      \put(33,5){$B_k$}
      \put(33,83){$B_l$}
    \end{overpic}
    \caption{}
    \label{fig:adjacent_band_a}
  \end{subfigure}
  \begin{subfigure}[t]{0.3\textwidth}
    \centering
    \begin{overpic}{figure4b.pdf}
      \put(-8,45){$D_i$}
      \put(75,45){$D_j$}
      \put(33,5){$B_k$}
      \put(33,83){$B_l$}
    \end{overpic}
    \caption{}
    \label{fig:adjacent_band_b}
  \end{subfigure}
  \caption{}
\end{figure}

An important observation we can make from the Seifert algorithm is that if there are no type II Seifert circles the resulting canonical Seifert surface is \emph{almost planar}. This means that the surface can be embedded in a plane except for the crossing regions in the half twisted bands and the unknotted tubes. In the following we will make use of this observation several times. Therefore, it is beneficial to us to have type II free canonical Seifert surfaces. Kauffman showed in Proposition 7.3 in \cite{Kauffman1987} that a link diagram with type II Seifert circles can be rid of all type II Seifert circles by adding one additional link component per type II Seifert circle. Hirasawa improved this result by showing that the type II Seifert circles of a link $L$ can be removed by an isotopy instead of adding link components. Moreover, the canonical Seifert surface of $L$, obtained from a given projection, is also deformed by the isotopy applied to $L$. The resulting Seifert surface remains canonical for the isotoped link with respect to the same projection. Aaltonen republished Hirasawa's result in 2014.

\begin{theorem}[\cite{Aaltonen2014,Hirasawa1995}]
\label{thm:altonen}
Let $\Sigma$ be a canonical Seifert surface with type II Seifert discs. Then, $\Sigma$ is isotopic to a type II free canonical Seifert surface $\Sigma'$.
\end{theorem}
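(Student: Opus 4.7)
The plan is to induct on the number of type II Seifert circles of $\Sigma$. The base case of zero type II circles is immediate, with $\Sigma' = \Sigma$. For the inductive step, I would single out a Seifert circle $C$ that is maximal in the nesting poset, i.e., $C$ encloses at least one other Seifert circle but is not itself enclosed by any other. Such a maximal $C$ exists by finiteness of the diagram and is necessarily of type II.

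The key geometric move is an ambient isotopy in $\mathbb{R}^3$ that sweeps the entire configuration enclosed by $C$ out of the projection plane, rotates it about a line tangent to $C$, and redeposits it in the exterior of $C$. Concretely, let $\Delta$ denote the union of the Seifert disc bounded by $C$ together with all Seifert discs, half-twisted bands, and unknotted tubes of $\Sigma$ whose projections lie inside $C$. The \emph{Hirasawa flip} is a $180^\circ$ rotation of $\Delta$ about an axis tangent to $C$, followed by a translation into an empty region of the exterior of $C$. Because $C$ is maximal, nothing outside $C$ obstructs the motion, so the flip is realizable by an ambient isotopy. Projecting the isotoped link onto the plane yields a new diagram whose canonical Seifert surface $\Sigma_1$ is isotopic to $\Sigma$. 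In $\Sigma_1$, the circle $C$ no longer encloses anything, and, since the flipped material is parked in a previously empty region, no circle that was formerly type I has acquired a new enclosure. Hence the number of type II Seifert circles strictly decreases and induction closes the argument.

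The main obstacle is verifying that the output of the flip is indeed a canonical Seifert surface with respect to the new projection. Each band or tube of $\Sigma$ that connected a Seifert disc inside $C$ to one outside $C$ must now be routed around $C$, producing a pair of new crossings on the projection of $C$. One must check that each such crossing is compatible with the Seifert orientations of the incident strands and that Seifert's algorithm applied to the new diagram recovers $\Sigma_1$ up to isotopy. A secondary subtlety is the existence of a suitable empty region of the exterior of $C$ in which to deposit $\Delta$: if needed, one first performs a preliminary planar isotopy of the diagram to clear such a region, a step that does not change the underlying link or its canonical surface. Once these verifications are in place, the induction terminates in a type II free canonical Seifert surface $\Sigma'$ isotopic to $\Sigma$, as required.
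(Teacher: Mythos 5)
There is a genuine gap, and it sits exactly where you park it as a ``verification'': the claim that the projection of the flipped configuration is again a diagram whose canonical Seifert surface is the flipped surface. First, a problem with the move as literally defined: your $\Delta$ contains the disc $D_C$ bounded by $C$ itself, and a rigid rotation-plus-translation of $\Delta$ carries the enclosed circles to circles that are still enclosed by the image of $C$, so the number of type II circles does not drop; moreover, translating $\Delta$ to a distant empty region drags the bands and tubes attached to $D_C$ across whatever lies in between, so maximality of $C$ does not by itself make the motion unobstructed. Presumably you mean to flip only the material strictly inside $C$, keeping $D_C$ fixed. But then observe that no half twisted band can join a disc strictly inside $C$ to a disc strictly outside $C$ (it would have to cross $C$); the bands that matter are the inward bands joining $D_C$ to the enclosed discs, and after the flip every such band whose attachment point is far from the rotation axis must be dragged from its attachment point on $C$ across the projection of $D_C$ (and possibly across other inward bands) to reach the relocated interior. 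At each place where such a band passes over $D_C$, the new diagram acquires crossings at which the surface has no half twisted band --- the band merely lies above the disc --- so Seifert's algorithm applied to the new projection does not reproduce the flipped surface. Checking ``compatibility with Seifert orientations'' is not the issue; the surface is simply not in disc-band position with respect to the new diagram, and restoring that is not a routine check but the entire content of the theorem.

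The paper's (Hirasawa--Aaltonen) proof avoids this by never moving the enclosed discs at all: one deforms the type II disc $D_i$ itself, dragging an interval $I \subset \partial D_i$ into the interior of $D_i$ hugging its boundary, and at each inward band pushing a subinterval of $I$ out of $D_i$ underneath that band. This creates, per inward band, exactly two new crossings together with two new discs and two new bands, so the surface remains canonical with respect to the \emph{same} projection at every stage, and $D_i$ is replaced by type I discs, which gives the induction. If you want to salvage the flip, you would need an analogous local mechanism converting each ``band over disc'' incidence produced by the flip back into canonical disc-band form, and then a verification that the type II count still decreases after those local modifications; as written no such mechanism is given, and without it the inductive step does not close.
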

\begin{proof}
We present the proof from \cite{Aaltonen2014} in a less technical form than the proof in the publication but the idea is the same. The proof is an induction over the number of type II Seifert discs. It suffices to show that any type II Seifert disc can be removed by an isotopy. The description of the following isotopies is with respect to the projection from which $\Sigma$ was obtained. Yet, the isotopies are executed in $\mathbb{R}^3$.

\begin{figure}[t]
  \centering
  \begin{subfigure}[t]{0.45\textwidth}
    \centering
    \begin{overpic}[scale=1.25]{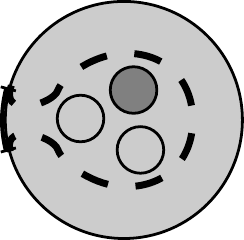}
      \put(40, 79){$D_i$}
      \put(-17, 40){$I$}
    \end{overpic}
    \caption{}
    \label{fig:thm_aaltonen_a}
  \end{subfigure}
  \begin{subfigure}[t]{0.45\textwidth}
    \centering
    \begin{overpic}[scale=1.25]{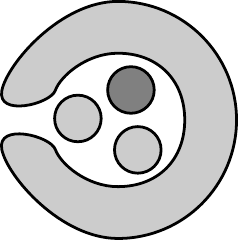}
      \put(40, 80){$D_i$}
    \end{overpic}
    \caption{}
    \label{fig:thm_aaltonen_b}
  \end{subfigure}
  \begin{subfigure}[t]{0.45\textwidth}
    \centering
    \begin{overpic}{figure5c.pdf}
      \put(20, 17){$J$}
    \end{overpic}
    \caption{}
    \label{fig:thm_aaltonen_c}
  \end{subfigure}
  \begin{subfigure}[t]{0.45\textwidth}
    \centering
    \begin{overpic}{figure5d.pdf}
    \end{overpic}
    \caption{}
    \label{fig:thm_aaltonen_d}
  \end{subfigure}
  \caption{}
\end{figure}

Let $D_i$ be a type II Seifert disc. The bands in $B(D_i)$ which connect to discs inside $D_i$ with respect to the projection are called inward bands. Outward bands are the ones which connect to discs outside $D_i$. Choose an interval $I$ on $\partial D_i$ such that it does not intersect with any band in $B(D_i)$. Drag the interval $I$ into the interior of $D_i$ such that $I$ traces along $\partial D_i \setminus I$ so closely that the dragged interval does not intersect with any interior disc in the projection (Figure \ref{fig:thm_aaltonen_a} and \ref{fig:thm_aaltonen_b}). Now, for every inward band $B_j$ drag an interval $J \subset I$ below $B_j$ to the exterior of $D_i$. The dragging is performed around $B_j \cap D_i$ in a rotational motion and in the direction that $J$ never intersects with $B_j$ (Figure \ref{fig:thm_aaltonen_c} and \ref{fig:thm_aaltonen_d}). This gives rise to two new bands and two new Seifert discs. After applying such an isotopy for all inward bands in $B(D_i)$ the disc $D_i$ itself is replaced with type I Seifert discs and half twisted bands. Notice that no special treatment is necessary for outward bands and that the new discs are the Seifert discs of the isotoped link. 

In conclusion, stretching the interval $I$ into $D_i$ except for inward bands where the interval is outside of $D_i$ removes the type II Seifert disc $D_i$ at the cost of adding two new bands and two new type I Seifert discs for every inward band. Furthermore, the resulting Seifert surface is still canonical with respect to the same projection.

Figure \ref{fig:aaltonen_ex} illustrates the algorithm with an example type II Seifert disc containing only one other Seifert disc.
\end{proof}

\begin{figure}[h]
  \centering
  \begin{subfigure}[t]{0.3\textwidth}
    \centering
    \includegraphics[scale=.9]{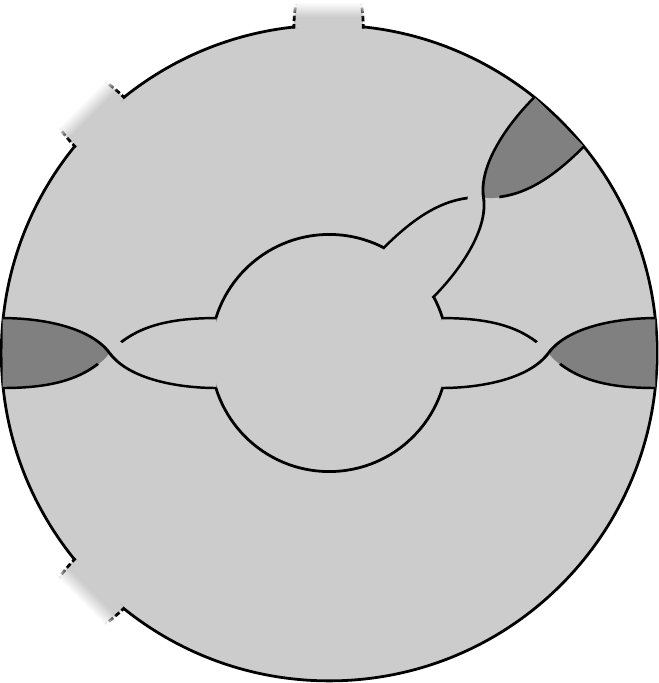}
  \end{subfigure}
  \begin{subfigure}[t]{0.3\textwidth}
    \centering
    \includegraphics[scale=.9]{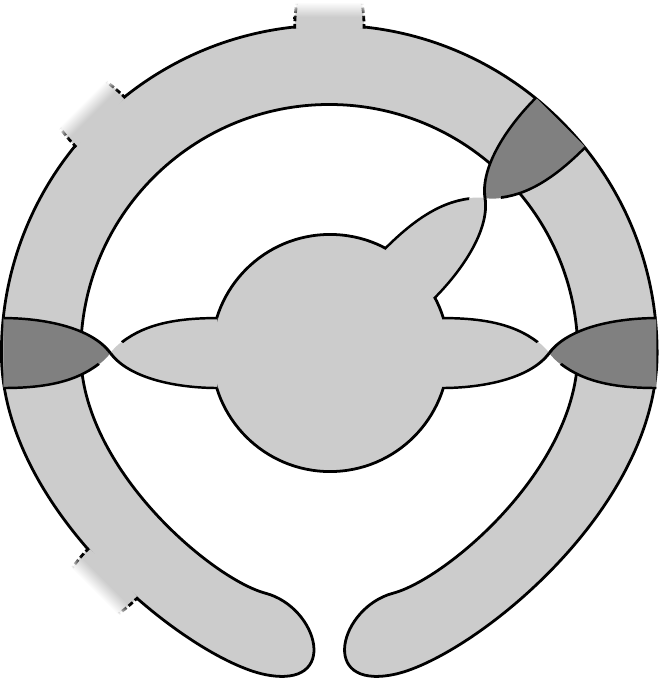}
  \end{subfigure}
  \begin{subfigure}[t]{0.38\textwidth}
    \centering
    \includegraphics[scale=.9]{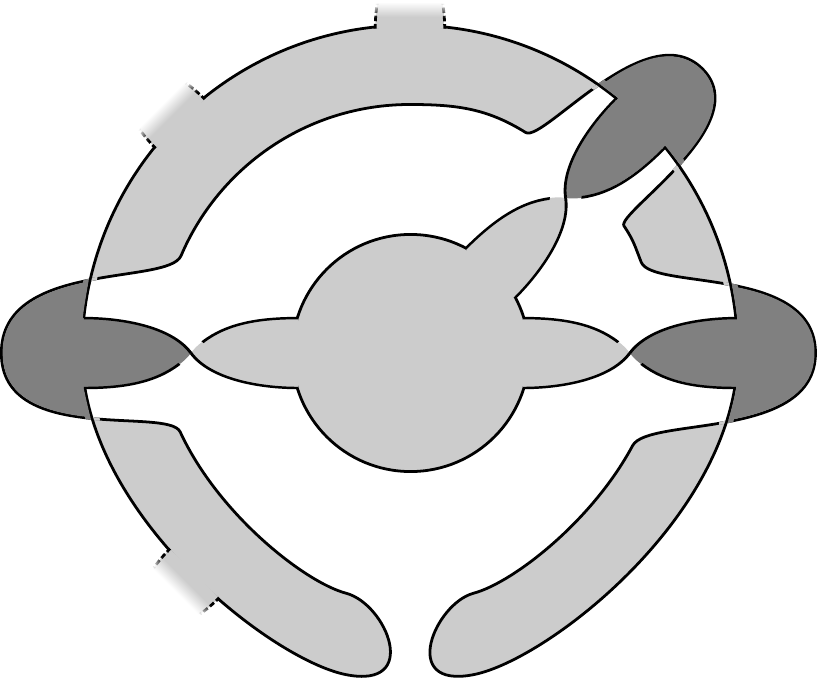}
  \end{subfigure}
  \caption{}
  \label{fig:aaltonen_ex}
\end{figure}

\section{Seifert Graphs}
\label{sec:seifert_graph}
We already mentioned that the Seifert surface is the basis for many important concepts in knot theory. A further concept based on the Seifert surface is the representation of a canonical Seifert surface as a graph. We saw that the Seifert algorithm produces a set of disjoint discs that are eventually connected by half twisted bands and possibly unknotted tubes. For every component of a canonical Seifert surface we can construct a planar graph from the discs and their connecting half twisted bands. This graph representation is helpful because we can apply well known results from graph theory to gain information about the surface.

\begin{definition}
Let $\Sigma = \left\{D_1,\dots,D_n;B_1,\dots,B_m;-\right\}$ be a canonical Seifert surface. Then, the \emph{Seifert graph} of $\Sigma$ is the graph $(V, E)$ where $V = \left\{D_1,\dots,D_n\right\}$ is the set of vertices and $E=\left\{B_1,\dots,B_m\right\}$ the set of edges.
\end{definition}

The Seifert graph of a canonical Seifert surface has several useful properties which allow us to proof Lemma \ref{lem:graph_limitation}. Lemma \ref{lem:graph_limitation} plays an essential role in the proof of Theorem \ref{thm:canonical_surface_twist_trivial}.

\begin{lemma}[\cite{Cromwell2004, Murasugi1996}]
\label{lem:seifert_graph_properties}
A Seifert graph is planar, bipartite and has no loops.
\end{lemma}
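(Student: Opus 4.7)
The plan is to verify the three properties in turn, with loop-freeness coming out for free once bipartiteness is established.

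For planarity, I would work in the projection plane from which the Seifert algorithm produced the component $\Sigma = \{D_1,\dots,D_n;B_1,\dots,B_m;-\}$. The Seifert circles $\partial D_1,\dots,\partial D_n$ are pairwise disjoint simple closed curves in that plane, so we may place a vertex $p_i$ in the interior of each $D_i$. Every band $B_k$ arises from a unique crossing of the original diagram, located at a point $c_k\in\mathbb{R}^2$, and I would draw the corresponding edge as a short arc from $p_i$ through $c_k$ to $p_j$, staying inside $D_i\cup D_j$ except at $c_k$. Distinct bands come from distinct crossings, hence from distinct points $c_k$, so the chosen arcs can be taken pairwise disjoint. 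This exhibits the Seifert graph as a planar graph drawn directly inside the projection plane.

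For bipartiteness, I would use the link orientation. Each Seifert circle $\partial D_i$ inherits an orientation and is therefore either clockwise or counterclockwise as a Jordan curve in the plane; colour $D_i$ white or black accordingly. At every crossing $c_k$ the two incoming strands run in locally opposite directions, and the Seifert smoothing replaces the crossing by two parallel arcs inheriting those opposite orientations. A short inspection of the local pictures of an oriented crossing shows that the two Seifert circles meeting at $c_k$ rotate in opposite senses, so the two feet of $B_k$ sit on discs of opposite colour. Therefore every edge of the Seifert graph joins a white vertex to a black vertex, which is precisely the bipartite condition. A loop at a vertex $D_i$ would then force $D_i$ to be both colours at once, so the graph is automatically loop-free.

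The only delicate step I anticipate is the orientation bookkeeping in the bipartite argument: one must check that the clockwise versus counterclockwise sense of a Seifert circle is globally well-defined (it is, since the circle is a single oriented simple closed curve in the plane) and verify the orientation flip across each local picture of a crossing. This is a finite local check, independent of the positive or negative sign of the band, and depends only on the combinatorial identification of the two Seifert circles joined at $c_k$. Once this is done, planarity, bipartiteness and loop-freeness all follow.
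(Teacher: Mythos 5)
The paper gives no proof of this lemma (it simply cites Cromwell and Murasugi), so your proposal stands on its own; the planarity sketch is the standard one, but your bipartiteness step contains a genuine error. You colour each Seifert circle by its rotational sense (clockwise versus counterclockwise) and claim that the two circles meeting at a crossing always rotate in opposite senses. This is false when one of the two circles is nested inside the other, i.e.\ exactly in the presence of type II Seifert circles, which the lemma must cover (the paper invokes Theorem \ref{thm:altonen} precisely because such circles occur). Concretely, in the standard diagram of the Hopf link or the trefoil as a closed braid (closure of $\sigma_1^2$ or $\sigma_1^3$), the Seifert circles are nested and coherently oriented around the braid axis, hence all have the same rotational sense, yet consecutive ones are joined by half twisted bands; your colouring gives them the same colour. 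The local analysis also starts from the wrong picture: at a crossing whose smoothing is coherent the two strands run locally in the \emph{same} direction, and the two resulting parallel, like-oriented arcs force opposite senses only when neither circle encloses the other; when one encloses the other, the same local picture forces \emph{equal} senses. A correct colouring must account for nesting: for instance, colour a circle $C$ by the parity of the total winding number of all Seifert circles about a point immediately to the left of $C$ (equivalently, nesting depth adjusted by the sense). Two circles joined at a crossing are either un-nested with opposite senses or nested with depths differing by one and equal senses, so this parity always changes across an edge; bipartiteness and, as you intended, loop-freeness then follow.

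A secondary weakness: your planarity argument asserts that the arcs through distinct crossings ``can be taken pairwise disjoint,'' but when circles are nested the regions $D_i$ are not disjoint subsets of the plane, and the fan of edges drawn inside a large disc has to avoid everything drawn inside the discs it contains, so disjointness does not follow merely from the crossings being distinct points. A painless repair: the Seifert circles together with one short arc per crossing form a graph already embedded in the plane, and the Seifert graph is obtained from it by contracting each circle to a point; being a minor of a planar graph, it is planar.
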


\begin{lemma}
\label{lem:graph_limitation}
Let $n > 1$ and $\Sigma = \left\{D_1,\dots,D_n;B_1,\dots,B_m;-\right\}$ be a canonical Seifert surface without type II Seifert discs. Then, either
\begin{equation*}
\exists k,l \in \left\{1,\dots,m\right\} \text{ such that } B_k, B_l \text{ are adjacent}
\end{equation*}
or 
\begin{equation*}
\exists i \in \left\{1,\dots,n\right\} \text{ such that } deg(D_i) = 1,2 \text{ or } 3.
\end{equation*}
\end{lemma}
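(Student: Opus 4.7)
The plan is to reduce the statement to a purely combinatorial property of the Seifert graph $G = (V,E)$, which by Lemma \ref{lem:seifert_graph_properties} is planar, bipartite, and loopless. I would fix a planar embedding of $G$ (inherited from the almost-planar projection available since $\Sigma$ is type II free) and split into the cases ``$G$ has a multi-edge'' and ``$G$ is simple''.

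First, suppose some pair of discs $D_i, D_j$ is joined by at least two bands. Among all such pairs of parallel edges between $D_i$ and $D_j$, I would choose a pair $B_k, B_l$ whose union bounds an innermost disc in the plane, meaning a disc whose interior contains no vertex or edge of $G$. By planarity such an innermost pair exists. The innermost condition translates precisely to the geometric condition that no band lies between $B_k$ and $B_l$, so $B_k$ and $B_l$ are adjacent in the sense of the definition, and the first alternative holds.

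Next, assume $G$ is simple. Since $\Sigma$ consists of a single component, $G$ is connected. For $n = 2$ this forces $m = 1$, so both discs have degree $1$, giving the second alternative. For $n \geq 3$ I would invoke the standard Euler-characteristic bound for simple bipartite planar graphs, $m \leq 2n - 4$. This yields
\begin{equation*}
\frac{1}{n}\sum_{i=1}^n \deg(D_i) = \frac{2m}{n} \leq \frac{4n-8}{n} < 4,
\end{equation*}
so some vertex $D_i$ has $\deg(D_i) \leq 3$; since $G$ is connected with $n > 1$, we also have $\deg(D_i) \geq 1$, delivering the second alternative.

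The main obstacle is the multi-edge case: the notion of ``adjacent'' in the definition is a geometric condition on the surface embedding (no band lies between the two in the planar projection), whereas the Euler bound is a purely combinatorial statement about the abstract graph. The key link is that, because $\Sigma$ has no type II Seifert discs, its projection is almost planar, so the cyclic order of bands around a disc in $G$'s planar embedding agrees with the physical order of the bands on $\partial D_i$. Making this identification rigorous, and thereby justifying that the innermost-pair argument produces a pair of bands with \emph{nothing} between them on the surface, is the step that deserves the most care; the rest is a direct application of planar graph theory.
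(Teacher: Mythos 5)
Your simple-graph case is fine and matches the Euler-characteristic counting the paper uses, but the multi-edge case has a genuine gap: the claim that ``by planarity'' some pair of parallel edges bounds a disc whose interior contains no vertex or edge of $G$ is false. Take vertices $u,v,w,x$ with two parallel edges between $u$ and $v$ and a path $u\!-\!w\!-\!x\!-\!v$ drawn inside the bigon region they bound. This multigraph is planar, bipartite and loopless, it has a multi-edge, yet its unique pair of parallel edges encloses other edges and vertices, so the corresponding bands are not adjacent and your first alternative is not delivered. In such a situation you would need to fall back on the degree bound, but you have only proved $m\le 2n-4$ for \emph{simple} bipartite planar graphs, so your two cases do not cover everything: the dangerous configuration is ``multi-edges present but no empty bigon,'' and there your argument produces neither alternative. (The lemma is still true there -- in the example above several vertices have degree $\le 3$ -- but nothing in your proof certifies it.)

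The repair is essentially the paper's proof, and it avoids the case split altogether: work in the spherical embedding of the Seifert graph induced by the almost-planar surface (this is also where your concluding remark about cyclic orders belongs, and the paper stresses that type II freeness is what makes this induced embedding faithful). In that embedding, ``$B_k,B_l$ adjacent'' corresponds exactly to a face bounded by two edges. So assume, for contradiction, that there are no adjacent bands and every disc has degree at least $4$. Looplessness rules out faces of length $1$, absence of adjacent bands rules out length $2$, and bipartiteness forces even face lengths, so every face has length at least $4$; summing face lengths gives $4F\le 2E$, and Euler's formula $V-E+F=2$ (valid for multigraph embeddings, where your simple-graph restriction is unnecessary) yields $E\le 2V-4$. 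On the other hand the degree hypothesis gives $4V\le 2E$, i.e. $2V\le E$, and combining the two inequalities gives $E\le E-4$, a contradiction. This is the same Euler-characteristic mechanism you invoke, but applied to faces of the induced embedding rather than to an ``innermost parallel pair,'' which is the step of your write-up that cannot be made rigorous as stated.
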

\begin{proof}
Assume to the contrary that the type II free canonical Seifert surface has no adjacent bands and every Seifert disc has degree at least four. 

From Lemma \ref{lem:seifert_graph_properties} we know that the Seifert graph is planar, bipartite and has no loops. As the graph is planar it can be embedded in the sphere $S^2$. Such an embedding divides $S^2$ into a finite number of faces. Let $V$ be the number of vertices, $E$ the number of edges and $F$ the number of faces. It is a well known fact for the Euler characteristic (Theorem 5.A.1 in \cite{Rolfsen1976}) that 
\begin{equation}
\label{eq:euler_characteristic}
\chi(S^2) = V - E + F = 2.
\end{equation}

In the following we require that the faces stem from the Seifert graph diagram which is directly induced by the canonical Seifert surface. Furthermore, it is essential to have a canonical Seifert surface without type II Seifert discs. The reason is that without type II Seifert discs, the surface can be reconstructed from the vertices, edges and faces. Each vertex gives rise to a Seifert disc and each edge to a half twisted band (the sign is not important here). Note that this is not possible for type II discs because we lose the information about how the type II discs are connected by the half twisted bands. So, by showing that the directly induced Seifert graph diagram leads to a contradiction we can conclude that such a canonical Seifert surface could not have existed. 

Continuing with the proof, from the fact that there are no loops in the Seifert graph we conclude that every face has more than one edge. The assumption that there are no adjacent bands is equivalent with the property that every face is bounded by more than two edges. Furthermore, every face is bounded by an even number of edges because otherwise there would be a circuit of uneven length, which would violate the bipartiteness of the graph. In conclusion, every face is bounded by at least four edges. From these observations it follows that
\begin{equation}
\label{eq:face_edge_relation}
4F \leq \sum_{i=1}^{F} d(f_i) = 2E
\end{equation}
where $\left\{f_1, \dots, f_F\right\}$ is an enumeration of all faces and $d(f_i)$ is the number of edges bounding face $f_i$. The reason why the sum equals twice the edge count is the following. Each edge belongs to the boundary of two faces. Therefore, by summing over all faces each edge is counted twice.

From (\ref{eq:euler_characteristic}) and (\ref{eq:face_edge_relation}) follows
\begin{equation}
\label{eq:vertex_edge_limit}
E \leq 2V - 4.
\end{equation}

Because every Seifert disc is of degree at least four, every vertex has degree at least four. Therefore,
\begin{equation}
\label{eq:vertex_edge_relation}
4V \leq \sum_{i=1}^{V} d(v_i) = 2 E
\end{equation}
where $\left\{v_1, \dots, v_V\right\}$ is an enumeration of all vertices and $d(v_i)$ is the degree of vertex $v_i$. The reason why the sum equals twice the edge count is that each edge is incident to two vertices. Therefore, by summing over all vertices each edge is counted twice.

The inequalities (\ref{eq:vertex_edge_limit}) and (\ref{eq:vertex_edge_relation}) yield the contradiction 
\begin{equation*}
E \leq 2V - 4 \leq E -4. \qedhere
\end{equation*}
\end{proof}

\section{Twist Equivalence and Twist Triviality}
\label{sec:twist_equivalence}

It is a fact that not every Seifert surface is twist trivial. A necessary condition can be found by studying the fundamental group of the surface complement. Notice that applying a ribbon twist preserves the fundamental group of the surface complement. Therefore, if two surfaces $\Sigma_1, \Sigma_2$ are twist equivalent then the fundamental groups of their surface complements are isomorphic, $\pi_1(\mathbb{R}^3 \setminus \Sigma_1) \simeq \pi_1(\mathbb{R}^3 \setminus \Sigma_2)$. 

\begin{proof}[Proof of Proposition \ref{prop:twist_trivial_free}]
Let $\Sigma'$ be the standard n-fold torus with m holes to which $\Sigma$ is twist equivalent. From the fact that neither a ribbon twist nor an isotopy changes the fundamental group $\pi_1(\mathbb{R}^3 \setminus \Sigma)$ it follows that $\pi_1(\mathbb{R}^3 \setminus \Sigma) \simeq \pi_1(\mathbb{R}^3 \setminus \Sigma')$. The Seifert-van Kampen theorem (§70 in \cite{Munkres2000} or 1.2 in \cite{Hatcher2001}) implies that $\pi_1(\mathbb{R}^3 \setminus \Sigma') \simeq F_{2n+m-1}$, the free group of $2n+m-1$ generators.
\end{proof}

From this necessary condition it is straightforward to construct a surface that cannot be twist trivial. Many satellite knots and links (\cite{Cromwell2004} or \cite{Adams1994}) serve as counterexamples. For instance, take $L=L_1 \cup L_2$ to be the cable link of the trivial two component link with the trefoil knot as its companion (Figure \ref{fig:free_group_example}). A possible, Seifert surface $\Sigma$ is shown in Figure \ref{fig:free_group_example}. Clearly, the fundamental group of its complement $\pi_1(\mathbb{R}^3 \setminus \Sigma)$ is isomorphic to the knot group of the trefoil knot, which is known to be non-free (Chapter 3 Section B in \cite{Rolfsen1976}). Thus, the surface $\Sigma$ cannot be a canonical Seifert surface (\cite{Kawauchi1996}) nor can it be twist trivial. This is also intuitively obvious for this example because a ribbon twist can only add full twists to the band. However, this will obviously never unknot the surface and so it cannot be twist trivial.

\begin{figure}[htp]
  \centering
  \begin{overpic}{figure7.pdf}
    \put(4,55){$L_1$}
    \put(15,20){$L_2$}
    \put(84,66){$\Sigma$}
  \end{overpic}
  \caption{}
  \label{fig:free_group_example}
\end{figure}

Baader and Dehornoy showed that a canonical Seifert surface of a positive braid knot is twist trivial (\cite{Baader2013}). This implies that a canonical Seifert surface of any braid knot is twist trivial. The reason is that a negative crossing can be changed into a positive crossing by a single ribbon twist. Therefore, we first make the braid knot positive by ribbon twists. Then, by Baader and Dehornoy's result we know that the braid knot is twist trivial. We generalize this result in Theorem \ref{thm:canonical_surface_twist_trivial} to canonical Seifert surfaces of any link. The remainder of this section is concerned with the proof of Theorem \ref{thm:canonical_surface_twist_trivial} and for that we need the following lemma. 

\begin{lemma}
\label{lem:ribbon_twist_isotopy_commute}
Let $\Sigma$ be a Seifert surface, $H:[0,1]\times\mathbb{R}^3 \to \mathbb{R}^3$ an ambient isotopy and $R$ a ribbon twist on $\Sigma$. Then, there exists a ribbon twist $R'$ on $H(1,\Sigma)$ such that
\begin{equation*}
H(1,R(\Sigma)) =R'(H(1,\Sigma)).
\end{equation*}
\end{lemma}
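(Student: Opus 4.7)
The plan is to observe that a ribbon twist is determined by purely local data on $\Sigma$, namely the arc $I$ together with a choice of ``side'' on which to insert the full twist, and then to transport this data forward along $H$. Concretely, I would set $I' := H(1, I)$. Since $H(1, \cdot)$ is a homeomorphism of $\mathbb{R}^3$ carrying $\Sigma$ onto $H(1,\Sigma)$, it sends $\partial \Sigma$ to $\partial H(1,\Sigma)$ and the interior to the interior, so $I'$ is an embedded arc in $H(1,\Sigma)$ with $\varphi'(0),\varphi'(1) \in \partial H(1,\Sigma)$ and $\varphi'((0,1))$ in the interior. Hence $I'$ legitimately determines a ribbon twist $R'$ on $H(1,\Sigma)$, and the candidate identity is the obvious one.

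Next I would make the locality of the ribbon twist precise. The operation $R$ takes place in a regular neighborhood $N \subset \mathbb{R}^3$ of $I$ chosen so that $N \cap \Sigma$ is a disk $D$ containing $I$; outside $N$, nothing changes, so $R(\Sigma) \setminus N = \Sigma \setminus N$, and inside $N$ the disk $D$ is replaced by a standard ``full-twist'' disk $D^\tau$ meeting $\partial N$ in the same curve as $D$ and selected using the chosen side of $I$. Taking $N$ sufficiently thin, its image $N' := H(1,N)$ is a regular neighborhood of $I'$ in $\mathbb{R}^3$ with $N' \cap H(1,\Sigma) = H(1,D)$, and I would use precisely this neighborhood to define $R'$, with the side of $I'$ induced by pushing forward the side of $I$ under $H(1,\cdot)$.

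With these compatible choices, the proof of the equality is a comparison inside and outside $N'$. Outside $N'$, both $H(1, R(\Sigma))$ and $R'(H(1,\Sigma))$ agree with $H(1,\Sigma) \setminus N'$. Inside $N'$, both surfaces are the image of the standard full-twist model attached along the curve $H(1, \partial D)$ on the chosen side; in the first case the twist is inserted in $N$ and then transported by $H(1,\cdot)$, while in the second it is inserted directly in $N'$, but by naturality of the full-twist template under the diffeomorphism $H(1,\cdot)|_N : N \to N'$ these give the same embedded disk in $N'$. Concatenating the two pieces gives the asserted equality.

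The main obstacle is purely bookkeeping, namely ensuring that the ``side'' of the twist and the ``sense'' of the full twist are transported consistently by $H$. Since $H$ is an ambient isotopy it is orientation-preserving on $\mathbb{R}^3$, and since $\Sigma$ is oriented, the co-orientation of $I$ in $\Sigma$ used to pick a side maps to the analogous co-orientation of $I'$ in $H(1,\Sigma)$; this is what guarantees that the twist inserted in defining $R'$ has the same handedness as the one inserted in defining $R$, so that the two local models truly agree rather than differing by a sign. Once this is noted, the lemma follows.
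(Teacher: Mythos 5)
Your proposal is correct and follows essentially the same route as the paper: transport the defining arc $I$ to $H(1,I)$ and use the locality of the cut-and-glue operation to see that twisting then isotoping agrees with isotoping then twisting. You carry out the regular-neighborhood and side/handedness bookkeeping more carefully than the paper's brief argument, but the underlying idea is identical.
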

\begin{proof}
The ribbon twist $R$ is a local cut and glue operation along an interval $I \subset \Sigma$. The interval $I$ is deformed along with the surface $\Sigma$ by the isotopy $H$. So, applying a ribbon twist $R'$ to $H(1,\Sigma)$ along the interval $H(1,I)$ is the same as applying the ribbon twist $R$ along $I$ before the isotopy and then applying the isotopy $H$ (Figure \ref{fig:isotopy_rt_commute}).
\end{proof}
\begin{figure}[htp]
  \centering
  \begin{overpic}{figure8.pdf}
    \put(7,63){$\Sigma$}
    \put(3,52){$I$}
    \put(-5,34){$H(1,\Sigma) \Downarrow $}
    \put(41,61){$R(\Sigma)$}
    \put(43,54){$\Rightarrow$}
    \put(65,34){$\Downarrow H(1,R(\Sigma))$}
    \put(43,13){$\Rightarrow$}
    \put(21,5){$R'(H(1,\Sigma))$}
    \put(42,34){$\circlearrowleft$}
  \end{overpic}
  \caption{}
  \label{fig:isotopy_rt_commute}
\end{figure}

The previous lemma helps us to show that two surfaces $\Sigma, \Sigma'$ are twist equivalent. Now, we can make use of isotopies and ribbon twists in any order. After we found a sequence of isotopies and ribbon twists which maps $\Sigma$ to $\Sigma'$ we can apply the lemma to reorder the sequence such that all ribbon twists are applied first. The resulting surface of the ribbon twists is isotopic to $\Sigma'$ by the remaining isotopies of the sequence. Hence, $\Sigma$ and $\Sigma'$ are twist equivalent. 

The fact that we can deform surfaces by isotopies between ribbon twists to show twist equivalence leads to a procedure we call \emph{non-entangling band sliding}. Let $\Sigma = \left\{D_1,\dots,D_n;B_1,\dots,B_m;-\right\}$ be a type II free canonical Seifert surface. As observed earlier, $\Sigma$ is almost planar. Hence, it is possible to slide one end of a band $B_i \in \Sigma$ along $\partial \Sigma$ such that it does not entangle with any other bands. While sliding the end of $B_i$ along the boundary, we can either keep the band above or below the plane in which we can embed $\Sigma$. We might need to apply ribbon twists to the bands over which the end is slid. In the example in Figure \ref{fig:entangle}, if the $D_i$-end of $B_i$ is slid along the dashed arrow to $D_l$ then $B_i$ would entangle around $D_k$ because $B_j$ has an unsuitable sign. Applying a ribbon twist to change the sign of $B_j$ produces a path along the dashed arrow which keeps $B_i$ on top of all discs. Hence, non-entangling band sliding is possible. This procedure consists of successive ribbon twists and isotopies. By Lemma \ref{lem:ribbon_twist_isotopy_commute}, the surface after the non-entangling band sliding is twist equivalent to $\Sigma$ .

\begin{figure}[t]
  \centering
  \begin{overpic}{figure9.pdf}
    \put(47.5,40){$B_j$}
    \put(22,15){$B_i$}
    \put(6.5,35){$D_i$}
    \put(60.5,35){$D_k$}
    \put(88,35){$D_l$}
  \end{overpic}
  \caption{} 
  \label{fig:entangle}
\end{figure}

\begin{proof}[Proof of Theorem \ref{thm:canonical_surface_twist_trivial}]
The proof strategy is an induction over the number of half twisted bands. In the process of removing bands, the Seifert discs may become punctured or receive handles. These holes and handles can be considered infinitesimally small during the iteration steps. Therefore, they do not interact with any successive band removal.

The canonical Seifert surface may consist of $N$ components which are connected by $N-1$ unknotted tubes. By interpreting each component as a vertex and each tube as an edge, the components and unknotted tubes form a graph. In particular, they do not form any circuit. This implies that the graph is a tree. Due to band removals, the canonical Seifert surface may split into new components. We assure that a new component is connected to the previous components by one unknotted tube. This implies that there are $N$ unknotted tubes connecting $N+1$ components, which means that the induced graph remains a tree.

The tree structure of the components and the unknotted tubes implies that we may consider all unknotted tubes and all but one components to be infinitesimally small during the iteration steps. Therefore, while removing bands from one component, all other components and unknotted tubes are infinitesimally small and do not interact with the band removal on this component. 

Before removing any bands, remove all type II Seifert discs according to Theorem \ref{thm:altonen}. We obtain a canonical Seifert surface $\Sigma'=\left\{D_1,\dots,D_n;B_1,\dots,B_m;T_1,\dots,T_o\right\}$ where $D_1, \dots, D_n$ are of type I. The surfaces $\Sigma$ and $\Sigma'$ are isotopic and therefore twist equivalent. The induction is now carried out on the type II free canonical Seifert surface $\Sigma'$.\\
\textbf{Base case:} Let $\Sigma'=\left\{D_1,\dots,D_n;-;T_1,\dots,T_{n-1}\right\}$ where the discs $\left\{D_1,\dots,D_n\right\}$ are the components connected by $\left\{T_1,\dots,T_{n-1}\right\}$. Every hole and handle can be moved to disc $D_1$ over the connecting tubes by an isotopy. As already mentioned, the induced graph of the components and unknotted tubes is a tree without multi-edges. In particular, $D_1$ can be interpreted as the root of the tree. Successively, the tree can be reduced from the leaves. A leaf is a disc with only one connecting tube. By retracting the tube, the leaf disc becomes a hole in its neighboring disc (Figure \ref{fig:tube_retraction}). This new hole can again be moved to $D_1$. Repeating these steps leads to a single disc with holes and handles. Thus, $\Sigma'$ is twist trivial.
\begin{figure}[htbp]
  \centering
  \begin{subfigure}[t]{0.3\textwidth}
    \centering
    \includegraphics{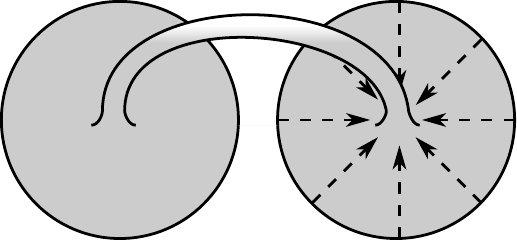}
  \end{subfigure}
  \begin{subfigure}[t]{0.3\textwidth}
    \centering
    \includegraphics{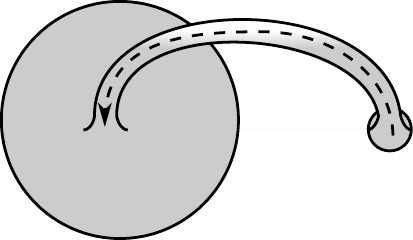}
  \end{subfigure}
  \begin{subfigure}[t]{0.3\textwidth}
    \centering
    \includegraphics{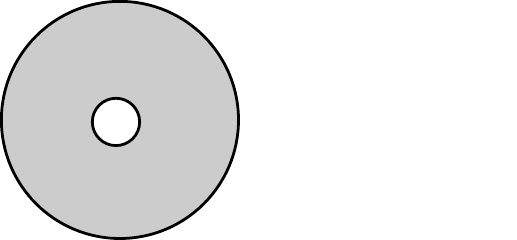}
  \end{subfigure}
  \caption{}
  \label{fig:tube_retraction}
\end{figure}
\\
\textbf{Inductive step:} Let $\Sigma'=\left\{D_1,\dots,D_n;B_1,\dots,B_m;T_1,\dots,T_o\right\}$. There must be at least one component which consists of at least two Seifert discs and at least one half twisted band. As mentioned, we consider all other components and unknotted tubes to be infinitesimally small. This allows the application of Lemma \ref{lem:graph_limitation} to this component. Therefore, at least one of the following cases occurs.
\\
\emph{Case 1:} $\exists k,l \in \left\{1,\dots,m\right\}$ such that $B_k, B_l$ are adjacent. \\
If two half twisted bands are adjacent then they connect the same discs $D_i, D_j \in \Sigma'$ and there does not exist any other band between $B_k, B_l$. If the signs of the adjacent bands $B_k, B_l$ are the same then the sign of $B_k$ or $B_l$ can be changed by a ribbon twist. This results in $B_k$ and $B_l$ having different signs. Subsequently, $B_k$ and $B_l$ can be removed by an isotopy, which is in this case a Reidemeister II move (Figure \ref{fig:reidemeister_move_a}). This results in an unknotted tube $T$ which connects $D_i, D_j$ (Figure \ref{fig:reidemeister_move_b}).
\begin{figure}[b]
  \centering
  \begin{subfigure}[t]{0.4\textwidth}
    \centering
    \begin{overpic}{figure11a.pdf}
      \put(-14,45){$D_i$}
      \put(77,45){$D_j$}
      \put(33,5){$B_k$}
      \put(33,83){$B_l$}
    \end{overpic}
    \caption{}
    \label{fig:reidemeister_move_a}
  \end{subfigure}
  \begin{subfigure}[t]{0.4\textwidth}
    \centering
    \begin{overpic}{figure11b.pdf}
      \put(-14,45){$D_i$}
      \put(77,45){$D_j$}
      \put(40,65){$T$}
    \end{overpic}
    \caption{}
    \label{fig:reidemeister_move_b}
  \end{subfigure}
  \caption{}
\end{figure}

Next, there are two cases. First case, the new tube $T$ is the only connection between otherwise disconnected components. This is the case where one new component arises. Notice that this new component is connected only via $T$. Hence, $\left\{D_1,\dots,D_n;B_1,\dots,B_m;T_1,\dots,T_o, T\right\} \setminus \left\{B_k, B_l\right\}$ is the resulting surface with now $o+2$ components.

\begin{figure}[b]
  \centering
  \begin{subfigure}[t]{0.45\textwidth}
    \centering
    \begin{overpic}{figure12a.pdf}
      \put(40,93){$B_l$}
      \put(40,74){$B_k$}
      \put(20,90){$D_i$}
      \put(60,90){$D_j$}
    \end{overpic}
    \caption{}
    \label{fig:thm_tube_to_handle_circuit_a}
  \end{subfigure}
  \begin{subfigure}[t]{0.45\textwidth}
    \centering
    \begin{overpic}{figure12b.pdf}
      \put(53,90){$e_l$}
      \put(48,83){$e_k$}
      \put(35,83){$f_{lk}$}
      \put(38,100){$f_l$}
      \put(23,53){$f_k$}
    \end{overpic}
    \caption{}
    \label{fig:thm_tube_to_handle_circuit_b}
  \end{subfigure}
  \begin{subfigure}[t]{0.45\textwidth}
    \centering
    \begin{overpic}{figure12c.pdf}
      \put(43,93){$T$}
      \put(20,90){$D_i$}
      \put(60,90){$D_j$}
    \end{overpic}
    \caption{}
    \label{fig:thm_tube_to_handle_circuit_c}
  \end{subfigure}
  \begin{subfigure}[t]{0.45\textwidth}
    \centering
    \includegraphics{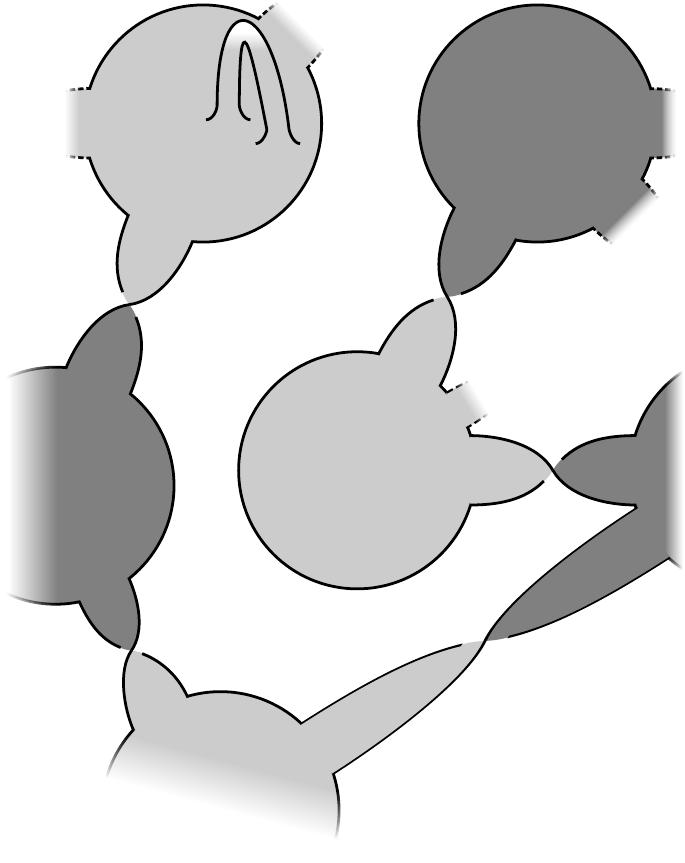}
    \caption{}
    \label{fig:thm_tube_to_handle_circuit_d}
  \end{subfigure}
  \caption{}
  \label{fig:thm_tube_to_handle_circuit}
\end{figure}
The second case is when the current component is still connected even without the tube $T$. This means that there exists a path along the remaining half twisted bands in the component such that one end of the tube $T$ can be moved to its other end. Now, our goal is to show that we can even move one end of $T$ to its other end without entanglement. For this, consider the Seifert surface and its Seifert graph prior to the Reidemeister II move (Figure \ref{fig:thm_tube_to_handle_circuit_a}). The bands $B_l, B_k$ induce two edges $e_l, e_k$ in the Seifert graph. The edges $e_l, e_k$ bound a face $f_{kl}$ because $B_l, B_k$ are adjacent. Additionally, each $e_l$ and $e_k$ belong to the boundary of further two faces $f_l, f_k$. Note that $f_l \neq f_k$ because the current component is connected even without $T$ and so even without $B_l, B_k$ (Figure \ref{fig:thm_tube_to_handle_circuit_b}). Now, consider the boundary $\partial f_l$ or $\partial f_k$. Without loss of generality we continue with $\partial f_k$. There exist half twisted bands which induce the path $\partial f_k \setminus e_k$. These half twisted bands form a path from $D_i$ to $D_j$ in the current component and are the reason why the component is still connected even without $T$. In other words, these bands form a path from one end of the tube to its other end. Since we found this path through the boundary of the face $f_k$, the tube $T$ and these bands bound an area which does not contain any other disc, tube or band (shaded area in Figure \ref{fig:thm_tube_to_handle_circuit_c}). We can ensure by ribbon twists that the half twisted bands form a path with alternating signs such that one end of the tube $T$ can be moved to its other end without entanglement. The result is a handle on one of the discs $D_i, D_j$ (Figure \ref{fig:thm_tube_to_handle_circuit_d}).
\\
\emph{Case 2:} $\exists i \in \left\{1,\dots,n\right\}$ such that $deg(D_i) =1$ \\
The disc $D_i$ along with the attached half twisted band can be merged into the neighboring disc by an isotopy.
\\
\emph{Case 3:} $\exists i \in \left\{1,\dots,n\right\}$ such that $deg(D_i) =2$ \\
Let $B(D_i) = \left\{B_k, B_l\right\}$ be the two attached bands. In this case there are two subcases. The first case is when both bands are connected to the same disc $D_k$. It might be that $D_k$ has a group $G$ of bands and discs between $B_k, B_l$ making $B_k, B_l$ non-adjacent. However, $G$ can be slid away over $B_k$ or $B_l$ such that $B_k, B_l$ become adjacent (Figure \ref{fig:group_slide}). Then the adjacent bands can be removed as in case 1.

\begin{figure}[htp]
  \centering
  \begin{subfigure}[t]{0.45\textwidth}
    \centering
    \begin{overpic}{figure13a.pdf}
      \put(43,56){$B_k$}
      \put(43,4){$B_l$}
      \put(14,30){$D_k$}
      \put(73,30){$D_i$}
      \put(40,30){$G$}
    \end{overpic}
  \end{subfigure}
  \begin{subfigure}[t]{0.45\textwidth}
    \centering
    \begin{overpic}{figure13b.pdf}
      \put(43,56){$B_k$}
      \put(43,4){$B_l$}
      \put(14,30){$D_k$}
      \put(73,30){$D_i$}
      \put(100,45){$G$}
    \end{overpic}
  \end{subfigure}
  \caption{}
  \label{fig:group_slide}
\end{figure}

The second case is when $B_k, B_l$ connect to two different discs $D_k, D_l$. By a ribbon twist we can assure that $B_k, B_l$ have different signs. After a rotation of $D_i$, it is possible to merge $D_i, D_k, D_l, B_k, B_l$ by an isotopy into one disc (Figure \ref{fig:two_band_disc_removal}).
\begin{figure}[b]
  \centering
  \begin{subfigure}[t]{0.45\textwidth}
    \centering
    \begin{overpic}{figure14a.pdf}
      \put(16,26){$B_k$}
      \put(76,26){$B_l$}
      \put(45,16){$D_i$}
      \put(0,16){$D_k$}
      \put(92,16){$D_l$}
    \end{overpic}
  \end{subfigure}
  \begin{subfigure}[t]{0.45\textwidth}
    \centering
    \begin{overpic}{figure14b.pdf}
      \put(16,26){$B_k$}
      \put(76,26){$B_l$}
      \put(45,16){$D_i$}
      \put(0,16){$D_k$}
      \put(92,16){$D_l$}
    \end{overpic}
  \end{subfigure}
  \caption{}
  \label{fig:two_band_disc_removal}
\end{figure}
\\
\emph{Case 4:} $\exists i \in \left\{1,\dots,n\right\}$ such that $deg(D_i) =3$ \\
Let $B(D_i) = \left\{B_k, B_l, B_m\right\}$ be the three attached bands. In this case there are three subcases. The first case is where $B_k, B_l, B_m$ are connected to the same disc. So the situation is analog to the first subcase of case 3 except that there is one additional band. However, this does not prevent us from removing a pair of bands in the same way as in case 3. Therefore, if a pair of bands is adjacent they can be removed. If both pairs are non adjacent either one of them can be made adjacent as in case 3 by sliding away the group of bands and discs making the pair non-adjacent (Figure \ref{fig:group_slide}).

The second case is when the $B_k, B_l, B_m$ are connected to two different discs $D_k, D_l$. Without loss of generality let $B_k$ be connected to $D_k$ and $B_l, B_m$ to $D_l$. If $B_l, B_m$ are adjacent then we can follow case 1. If $B_l, B_m$ are non-adjacent due to a group of bands and discs which are only attached to either $D_i$ or $D_l$ (but not both) we can slide the group away as in case 3. If $B_l, B_m$ are non-adjacent due to a group $G$ of bands and discs which are attached to both $D_i$ and $D_l$ we can exploit that $G$ is attached to $D_i$ only via $B_k$ (Figure \ref{fig:three_band_almost_adjacent_a}). Consider $G$ to be a rigid plane except for $B_k$. First, slide $G$ over $B_l$. Then apply a ribbon twist to $B_l$ in order to avoid entanglement in the next slide (Figure \ref{fig:three_band_almost_adjacent_b}). Then slide the $D_i$-end of $B_k$ over $B_l$. After the sliding, $B_k$ might be twisted one and a half times but by a ribbon twist we obtain a half twisted band. Eventually, $B_l, B_m$ become a pair of adjacent bands, which can be removed as in case 1 (Figure \ref{fig:three_band_almost_adjacent_c}). 

\begin{figure}[htp]
  \centering
  \begin{subfigure}[t]{0.45\textwidth}
    \centering
    \begin{overpic}{figure15a.pdf}
      \put(45,42){$B_l$}
      \put(45,4){$B_m$}
      \put(61,28){$B_k$}
      \put(11,23){$D_l$}
      \put(83,23){$D_i$}
      \put(50,23){$D_k$}
      \put(35,22.5){$G$}
    \end{overpic}
    \caption{}
    \label{fig:three_band_almost_adjacent_a}
  \end{subfigure}
  \begin{subfigure}[t]{0.45\textwidth}
    \centering
    \begin{overpic}{figure15b.pdf}
      \put(25,44){$B_l$}
      \put(45,4){$B_m$}
      \put(62,28){$B_k$}
      \put(11,23){$D_l$}
      \put(83,23){$D_i$}
      \put(46.5,45.5){$D_k$}
      \put(65,45.5){$G$}
    \end{overpic}
    \caption{}
    \label{fig:three_band_almost_adjacent_b}
  \end{subfigure}
  \begin{subfigure}[t]{0.45\textwidth}
    \centering
    \begin{overpic}{figure15c.pdf}
      \put(30,28){$B_l$}
      \put(45,4){$B_m$}
      \put(29,51){$B_k$}
      \put(11,23){$D_l$}
      \put(83,23){$D_i$}
      \put(46.5,45.5){$D_k$}
      \put(65,45.5){$G$}
    \end{overpic}
    \caption{}
    \label{fig:three_band_almost_adjacent_c}
  \end{subfigure}
  \caption{}
  \label{fig:three_band_almost_adjacent}
\end{figure}

\begin{figure}[b]
  \centering
  \begin{subfigure}[t]{0.45\textwidth}
    \centering
    \begin{overpic}{figure16a.pdf}
      \put(16,43){$\alpha)$}
      \put(30,43){$\beta)$}
      \put(41,3){$\gamma)$}
      \put(45,24){$D_i$}
      \put(10,24){$D_k$}
      \put(45,67){$D_l$}
      \put(80,24){$D_m$}
      \put(27,31){$B_k$}
      \put(53,43){$B_l$}
      \put(61,31){$B_m$}
    \end{overpic}
    \caption{}
    \label{fig:three_band_slide_end_positions}
  \end{subfigure}  
  \begin{subfigure}[t]{0.45\textwidth}
    \centering
    \begin{overpic}{figure16b.pdf}
      \put(45,24){$D_i$}
      \put(10,24){$D_k$}
      \put(45,67){$D_l$}
      \put(80,24){$D_m$}
      \put(27,41){$B_k$}
      \put(53,43){$B_l$}
      \put(61,31){$B_m$}
    \end{overpic}
    \caption{}
    \label{fig:three_band_slide_end_position_b}
  \end{subfigure}  
  \begin{subfigure}[t]{0.45\textwidth}
    \centering
    \begin{overpic}{figure16c.pdf}
      \put(45,16){$D_i$}
      \put(10,16){$D_k$}
      \put(45,59){$D_l$}
      \put(80,16){$D_m$}
      \put(27,23.5){$B_k$}
      \put(53,35){$B_l$}
      \put(61,23.5){$B_m$}
    \end{overpic}
    \caption{}
    \label{fig:three_band_slide_end_position_a_a}
  \end{subfigure}
  \begin{subfigure}[t]{0.45\textwidth}
    \centering
    \begin{overpic}{figure16d.pdf}
      \put(45,16){$D_i$}
      \put(10,16){$D_k$}
      \put(45,59){$D_l$}
      \put(80,16){$D_m$}
      \put(27,23.5){$B_k$}
      \put(25,41){$B_l$}
      \put(61,23.5){$B_m$}
    \end{overpic}
    \caption{}
    \label{fig:three_band_slide_end_position_a_b}
  \end{subfigure}
  \begin{subfigure}[t]{0.45\textwidth}
    \centering
    \begin{overpic}{figure16e.pdf}
      \put(46,13){$D_i$}
      \put(10,13){$D_k$}
      \put(45,57){$D_l$}
      \put(80,13){$D_m$}
      \put(27,21){$B_k$}
      \put(34,35){$B_l$}
      \put(61,21){$B_m$}
    \end{overpic}
    \caption{}
    \label{fig:three_band_slide_end_position_a_c}
  \end{subfigure}
  \begin{subfigure}[t]{0.45\textwidth}
    \centering
    \begin{overpic}{figure16f.pdf}
      \put(46,13){$D_i$}
      \put(10,13){$D_k$}
      \put(45,57){$D_l$}
      \put(80,13){$D_m$}
      \put(27,21){$B_k$}
      \put(36,35){$B_l$}
      \put(61,21){$B_m$}
    \end{overpic}
    \caption{}
    \label{fig:three_band_slide_end_position_a_d}
  \end{subfigure}
  \caption{}
  \label{fig:three_band_slide_end_position_a}
\end{figure}

The third case is when $B_k$, $B_l$, $B_m$ are connected to three different discs $D_k$, $D_l$, $D_m$. In order to determine how to reduce the number of half twisted bands, slide the $D_k$-end of $B_k$ along the boundary in the same direction until the end of the band reaches one of the three positions $\alpha),\beta)$ or $\gamma)$ as shown in Figure \ref{fig:three_band_slide_end_positions}. The reason that exactly one of $\alpha),\beta),\gamma)$ is reached is that we can continue sliding the end of $B_k$ until the other end of $B_k$ blocks the path. In other words, position $\beta)$ or $\gamma)$ is reached. If neither $\beta)$ nor $\gamma)$ is reached, we can continue sliding forever. This can only happen if we reach the point where we started. So, in this case position $\alpha)$ is reached.

If $\alpha)$ is reached then there is a path from the $D_i$-end of $B_l$ to $\alpha)$ (Figure \ref{fig:three_band_slide_end_position_a_a}). So, by crossing $B_k$ a second time there is even a path from $\beta)$ to $\gamma)$. By applying ribbon twists and isotopies, the $D_i$-end of $B_l$ can be slid to position $\gamma)$ without entanglement (Figure \ref{fig:three_band_slide_end_position_a_b}). Now $B_l$ is again attached to $D_i$ but to the opposite side of its starting position (Figure \ref{fig:three_band_slide_end_position_a_c}). The band $B_l$ might have several full twists (at most as many as the number of crossings $B_l$ crossed) but these can be removed by ribbon twists. With at most two ribbon twists we can change the crossing signs of $B_k, B_m$ such that the entire disc $D_i$ can be rotated into a convenient position (Figure \ref{fig:three_band_slide_end_position_a_d}). From this position $B_k, B_l, B_m$ and $D_i, D_k, D_l, D_m$ can be merged into one disc.

If $\beta)$ or $\gamma)$ is reached then the $D_k$-end of $B_k$ can be slid to $\beta)$ or $\gamma)$ without entanglement. This results in one additional hole in $D_i$ (Figure \ref{fig:three_band_slide_end_position_b}). 
\end{proof}

\section{Untwisting Number}
The untwisting number of a surface is invariant under isotopy. This follows directly from Lemma \ref{lem:ribbon_twist_isotopy_commute}.  Just like the unknotting number, the untwisting number is very difficult to compute (\cite{Bleiler1984,Wendt1937}). Nevertheless, a lower bound can be given by the unknotting number. 
\begin{remark}
\label{cor:untwisting_lower}
Let $\Sigma$ be twist trivial. Then,
\begin{equation*}
u(\partial \Sigma) \leq ut(\Sigma).
\end{equation*}
\end{remark}

This lower bound is optimal in the sense that there exist twist trivial surfaces for which the equality holds. Baader and Dehornoy showed that a canonical Seifert surface $\Sigma$ of a positive braid knot can be untwisted by $g$ ribbon twists where $g$ is the genus of the surface (\cite{Baader2013}). Furthermore, Rudolph showed that the unknotting number of positive braid knots is not smaller than $g$ (\cite{Rudolph1998}). Thus, we know $ut(\Sigma) = g$ from Baader and Dehornoy, $u(\partial \Sigma) \geq g$ from Rudolph and $u(\partial \Sigma) \leq ut(\Sigma)$ from the remark. This implies that $u(\partial \Sigma) = ut(\Sigma) = g$.
\\

The proof of Theorem \ref{thm:canonical_surface_twist_trivial} is basically an algorithm which untwists any canonical Seifert surface. Therefore, we can derive an upper bound of the untwisting number for canonical Seifert surfaces.

\begin{proof}[Proof of Corollary \ref{cor:upper_bound}]
From the proof of Theorem \ref{thm:canonical_surface_twist_trivial} we see that we need to analyze four cases for a type II free canonical Seifert surface. 
\\
\emph{Case 1:} $\exists k,l \in \left\{1,\dots,m\right\}$ such that $B_k, B_l$ are adjacent. \\
At most, one ribbon twist is required to make the signs of $B_k,B_l$ unequal. Then, if the Reidemeister II move leads to a new component we are done. In the other case, we need at most $m-2$ ribbon twists to move one end of the new tube to its other end (Figure \ref{fig:thm_tube_to_handle_circuit}). 

This results in at most $m-1$ ribbon twists in order to remove two bands.
\\
\emph{Case 2:} $\exists i \in \left\{1,\dots,n\right\}$ such that $deg(D_i) =1$ \\
No ribbon twist is required to remove one band.
\\
\emph{Case 3:} $\exists i \in \left\{1,\dots,n\right\}$ such that $deg(D_i) =2$ \\
If both bands connect to the same two discs then this is equivalent to case 1 because we remove two adjacent bands.

If each band connects to a different disc, then we can remove two bands with at most one ribbon twist (Figure \ref{fig:two_band_disc_removal}). 

This results in $m-1$ ribbon twists in order to remove two bands in the worst case.
\\
\emph{Case 4:} $\exists i \in \left\{1,\dots,n\right\}$ such that $deg(D_i) =3$ \\
Let $B(D_i) = \left\{B_k, B_l, B_m\right\}$ be the three attached bands. If all bands connect to the same disc then this is equivalent to case 1 because we remove two adjacent bands. Hence, we require at most $m-1$ ribbon twists to remove two bands.

If the bands connect two discs then we may need to slide away a group of bands (Figure \ref{fig:three_band_almost_adjacent}). This may require two ribbon twists. Then, we are in the case of adjacent bands again. Therefore, we need at most $m+1$ ribbon twists in order to remove two bands.

If each band connects to a separate disc we can remove either one or three bands. 
If only one band can be removed (Figure \ref{fig:three_band_slide_end_position_b}) then we need at most $2(m-1)$ ribbon twists. This is because we need to change the sign of a band twice if the end of $B_k$ is slid twice over this band. 
If all three bands can be removed (Figure \ref{fig:three_band_slide_end_position_a}) we need at most $2(m-2)$ ribbon twists to slide $B_l$ to the opposite side of $D_i$. This is because we never cross $B_l$ itself nor $B_m$ and we may cross bands twice. After $B_l$ is slid to the opposite side of $D_i$ it may be twisted at most $m-2$ times. To untwist $B_l$ we need $m-2$ ribbon twists. Finally, we need at most two ribbon twists to enable the rotation of $D_i$. This adds up to at most $3(m-2)+2$ ribbon twists to remove three bands.

Therefore, the case where all bands connect to a different disc and only one band can be removed is the worst, with $2(m-1)$ ribbon twists for one band.

If there are $m\geq 3$ bands then the overall worst case is where $2(m-1)$ ribbon twists are needed to remove one band. If there are only $m = 2$ we need obviously at most one ribbon twist. Hence, $2(m-1)$ is also an upper bound. In conclusion, for type II free canonical Seifert surfaces 
\begin{equation*}
ut(\Sigma) \leq \sum\limits_{i=1}^{m-1} 2(m-i) = \sum\limits_{i=1}^{m-1} 2i.
\end{equation*}
Lastly, the isotopy to remove all type II Seifert discs increases the number of half twisted bands at most by factor three.
\end{proof}

\bigskip
\noindent
Universit\"at Bern, Sidlerstrasse 5, CH-3012 Bern, Switzerland

\bigskip
\noindent
\texttt{mpfeuti@ganymede.ch}


\begin{thebibliography}{Mun00}

\bibitem[Aal14]{Aaltonen2014}
M.~Aaltonen.
\newblock Note on the canonical genus of a knot.
\newblock {\em arXiv:1411.1536}, November 2014.

\bibitem[Ada94]{Adams1994}
C.~C. Adams.
\newblock {\em The Knot Book}.
\newblock American Mathematical Society, 1994.

\bibitem[Ale28]{Alexander1928}
J.~W. Alexander.
\newblock Topological invariants of knots and links.
\newblock {\em Transactions of the American Mathematical Society},
  30(2):275--306, April 1928.

\bibitem[BD13]{Baader2013}
S.~Baader and P.~Dehornoy.
\newblock Trefoil plumbing.
\newblock {\em arXiv:1308.5866}, August 2013.

\bibitem[Ble84]{Bleiler1984}
S.~A. Bleiler.
\newblock A note on unknotting number.
\newblock {\em Mathematical Proceedings of the Cambridge Philosophical
  Society}, 96:469--471, November 1984.

\bibitem[Cro04]{Cromwell2004}
P.~Cromwell.
\newblock {\em Knot and Links}.
\newblock Cambridge University Press, 2004.

\bibitem[Hat01]{Hatcher2001}
A.~Hatcher.
\newblock {\em Algebraic Topology}.
\newblock Cambridge University Press, 1 edition, 2001.

\bibitem[Hir95]{Hirasawa1995}
M.~Hirasawa.
\newblock The flat genus of links
\newblock {\em Kobe Journal of Mathematics}, 12(2):155--159, December 1995.

\bibitem[Kau87]{Kauffman1987}
L.~H. Kauffman.
\newblock {\em On Knots}.
\newblock Annals of mathematics studies. Princeton University Press, 1987.

\bibitem[Kaw96]{Kawauchi1996}
A.~Kawauchi.
\newblock {\em A Survey of Knot Theory}.
\newblock Birkhauser, 1996.

\bibitem[Mun00]{Munkres2000}
J.~R. Munkres.
\newblock {\em Topology}.
\newblock Prentice Hall, 2 edition, 2000.

\bibitem[Mur96]{Murasugi1996}
K.~Murasugi.
\newblock {\em Knot Theory and its Applications}.
\newblock Birkhauser, 1996.

\bibitem[Mur65]{Murasugi1965}
K.~Murasugi.
\newblock On a certain numerical invariant of link types.
\newblock {\em Transactions of the American Mathematical Society},
  117:387--422, May 1965.
  
\bibitem[Rol76]{Rolfsen1976}
D.~Rolfsen.
\newblock {\em Knots and Links}, volume 346.
\newblock Amer Mathematical Society, 1976.

\bibitem[Rud98]{Rudolph1998}
L.~Rudolph.
\newblock Quasipositive plumbing (constructions of quasipositive knots and
  links, v).
\newblock {\em Proceedings of the American Mathematical Society},
  126(1):257--267, January 1998.

\bibitem[Sei35]{Seifert1935}
H.~Seifert.
\newblock {Über das Geschlecht von Knoten}.
\newblock {\em Mathematische Annalen}, 110(1):571--592, 1935.

\bibitem[Sta78]{Stallings1978}
J.~R. Stallings.
\newblock Constructions of fibred knots and links.
\newblock {\em Proceedings of Symposia in Pure Mathematics}, 32:55--60, 1978.

\bibitem[Wen37]{Wendt1937}
H.~Wendt.
\newblock {Die gordische Auflösung von Knoten}.
\newblock {\em Mathematische Zeitschrift}, 42:680--696, 1937.

\end{thebibliography}
\end{document}